\documentclass{article}

\usepackage[english]{babel}
\usepackage{amsthm}
\usepackage{amsmath}
\usepackage{amsfonts}
\usepackage{amssymb}
\usepackage{graphicx}
\usepackage[dvipsnames]{xcolor}
\usepackage{float}
\usepackage{indentfirst}
 \usepackage{theoremref}

\newtheorem{theorem}{Theorem}[section]

\newtheorem{lemma}[theorem]{Lemma}

\theoremstyle{definition}
\newtheorem{definition}{Definition}[section]

\theoremstyle{remark}

\title{Intrinsic Linking of 2-Complexes in $\mathbb{R}^4$}
\author{Nathan Huber, Ishaan Raghavendra Rao, \\Hannah Schwartz, and Tanishga Thankaraj Vijay}
\date{}

\begin{document}

\maketitle

\begin{abstract}
    We produce an infinite family of $2$-complexes that are intrinsically linked when embedded into four dimensions. In particular, we show that any embedding into  $\mathbb{R}^4$ of the suspension of a graph containing $K_6$ as a minor contains a non-trivially linked 1 and 2-cycle.  
\end{abstract}

\section{Introduction and Motivation}

A graph is intrinsically linked if, for any embedding of the graph into $\mathbb{R}^3$, there exists at least one pair of non-trivially linked cycles. The study of intrinsic linking stemmed from work of Conway and Gordon \cite{CandG} and Sachs \cite{sachs} in the 1980s showing the complete graph on 6 vertices, $K_6$, has this property. Research on spatial graphs remains active, see \cite{Adams97}, \cite{Adams3}, \cite{3Link}, \cite{Arbitrary}, and \cite{Complete} for example.   

Attempting to generalize intrinsic linking by simply embedding a graph into higher dimensions yields rather uninteresting results, as non-trivial $1$-dimensional links do not exist in $\mathbb{R}^n$ for $n>3$. Instead, it is natural to investigate how the notion of intrinsic linkedness extends to $n$-dimensional cell complexes (unions of $0$, $1 \dots n$-cells) embedded into higher dimensions. Most generally,  one can consider intrinsic linking of $k$-dimensional and $l$-dimensional subcomplexes in $\mathbb{R}^{k+l+1}$, see \cite{karasev2023converses} for example. 

In this paper, we consider intrinsically linked $2$-complexes in $\mathbb{R}^4$, meaning that each embedding of the complex contains a non-trivially  linked $1$ and $2$-cycle. In particular, we provide an infinite family of new examples of such $2$-complexes.
    
\begin{theorem} \label{mainthm}
    The suspension of any graph containing $K_6$ as a minor is an intrinsically linked $2$-complex in $\mathbb{R}^4$. 
\end{theorem}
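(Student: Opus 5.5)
We first reduce to the case of $K_6$ itself, and then adapt the Conway--Gordon--Sachs parity argument one dimension up.

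\emph{Reduction to $K_6$.} If $G$ contains $K_6$ as a minor, then $G$ contains a subdivision of a graph $H$ that contracts to $K_6$. It is cleaner to use the following monotonicity property: if $G$ has a minor $G'$ and $\Sigma G'$ is intrinsically linked, then so is $\Sigma G$. For subgraphs this is immediate, since $\Sigma G' \subset \Sigma G$. For an edge contraction $G \to G/e$, given an embedding of $\Sigma G$ we would thicken the suspended disk $\Sigma e$ (a $2$-cell joining the two suspension points through the edge $e$) and collapse it within a small neighborhood. This produces an embedding of something PL-isomorphic to $\Sigma(G/e)$ in which cycles correspond. A linked pair found in $\Sigma(G/e)$ then pulls back to a linked pair in $\Sigma G$, because linking number is computed homologically and is invariant under this collapse. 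I expect some care to be needed here, since the collapse is not literally an embedding of $\Sigma(G/e)$. An alternative is to argue directly with the cycles of $\Sigma G$ that map to the relevant cycles of $\Sigma(G/e)$.

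\emph{The case $K_6$.} The $2$-cycles of $\Sigma K_6$ are the suspensions $\Sigma C$ of cycles $C \subset K_6$, and these are $2$-spheres. The natural pairs to consider are a triangle $T$ in $K_6$, viewed as a $1$-cycle, together with the suspension $\Sigma T'$ of the complementary triangle $T'$. These are disjoint because the suspension points do not lie on $T$. Following Conway--Gordon, define
\[
  \lambda(f) = \sum_{\{T,T'\}} \operatorname{lk}\bigl(f(T), f(\Sigma T')\bigr) \pmod 2,
\]
summing over the $10$ unordered partitions of the vertices into two triangles, or over ordered pairs if symmetry requires it. The plan is to show that $\lambda$ is independent of the embedding $f$, and then to compute it for one explicit embedding.

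\emph{Invariance.} Any two PL embeddings of a $2$-complex in $\mathbb{R}^4$ are related by a generic homotopy, which passes through finitely many transverse crossings of an edge through a $2$-cell, together with possible $2$-cell/$2$-cell double-point changes that do not affect linking of $1$-cycles with $2$-cycles. A crossing of an edge $a$ through a $2$-cell $\Sigma b$, where $b$ is an edge of $K_6$ and the cell is a triangle $\{N,u,v\}$ or $\{S,u,v\}$, changes $\operatorname{lk}(T,\Sigma T')$ exactly when $a \subset T$ and $b \subset T'$. Crossings of $a$ with a cell containing a suspension point of $\Sigma T'$ also matter only through $b$, and disjointness forces $a \cap b = \emptyset$. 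For a fixed pair of disjoint edges $a,b$ in $K_6$, the remaining $2$ vertices must complete $a$ to $T$ and $b$ to $T'$, giving exactly $2$ such pairs. Hence each crossing changes $\lambda$ by an even amount, so $\lambda$ is invariant mod $2$. Crossings of an edge with a $2$-cell containing one of its own endpoints are non-generic or irrelevant and need to be handled. I expect this bookkeeping to be the main obstacle: correctly counting the edge/$2$-cell crossing contributions, with attention to adjacency, and making sure the sum is taken over the right set of pairs so that the parity is even. If the count turns out odd, I would instead sum over all pairs consisting of a triangle and the suspension of its complementary triangle in both orders.

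\emph{Computation.} Finally we exhibit an explicit embedding. Take a linear Conway--Gordon embedding of $K_6$ in $\mathbb{R}^3 \times \{0\}$ with exactly one Hopf-linked triangle pair, and cone it to the points $(0,0,0,\pm1)$. For a pair $T,T'$ in $\mathbb{R}^3$, the linking number of $T$ with $\Sigma T'$ in $\mathbb{R}^4$ equals $\operatorname{lk}_{\mathbb{R}^3}(T,T')$: push $T$ slightly in the $x_4$ direction, bound it by a disk, and count its intersections with $\Sigma T'$, which are the crossings of $T'$ with a Seifert surface of $T$. This identifies $\lambda$ with the Conway--Gordon invariant, which is $1$. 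Therefore every embedding of $\Sigma K_6$ contains a pair $T, \Sigma T'$ with odd, hence nonzero, linking number.
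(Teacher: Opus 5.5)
\textbf{Gap: the parity bookkeeping.} Your invariance step and your evaluation step concern two different sums, and neither sum has both properties.

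Your crossing count is right. When an edge $a$ crosses a face $b*N$ with $a\cap b=\emptyset$, and $x,y$ are the two remaining vertices, exactly $\operatorname{lk}(a\cup x,\Sigma(b\cup y))$ and $\operatorname{lk}(a\cup y,\Sigma(b\cup x))$ change, each by $\pm1$. But these two terms lie in two \emph{different} partitions, $\{a\cup x\,|\,b\cup y\}$ and $\{a\cup y\,|\,b\cup x\}$.

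Suppose you sum over the $10$ partitions, designating one triangle in each as the $1$-cycle. Then the crossing changes $\lambda$ by $\pm1$ whenever exactly one of these two partitions designates the triangle containing $a$. No choice of designations avoids this. Evenness would force designations to propagate: if $T$ is designated in $\{T\,|\,T'\}$, the triangle obtained from $T$ by swapping one of its vertices with one of $T'$ must be designated in the new partition. Yet the swaps $1\leftrightarrow 4$, $2\leftrightarrow 5$, $3\leftrightarrow 6$ carry $\{123\,|\,456\}$ with $123$ designated to $\{456\,|\,123\}$ with $456$ designated.

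Suppose instead you sum over both orders, as in your fallback. Then every crossing does change $\lambda$ by an even amount. But on the coned embedding, $\operatorname{lk}(T,\Sigma T')$ and $\operatorname{lk}(T',\Sigma T)$ are both $\pm\operatorname{lk}_{\mathbb{R}^3}(T,T')$. So $\lambda\equiv 2\lambda_{CG}\equiv 0$, and your last step (``this identifies $\lambda$ with the Conway--Gordon invariant'') fails. As written, a mod $2$ count controlled by crossing changes is either not shown to be invariant or is identically zero.

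\textbf{The paper's remedy and the missing input.} The paper uses a finer, mod-$4$-type quantity: the halved symmetric sum $\Lambda=\sum\frac12\bigl(|\omega(T,\mathcal S(\bar T))|+|\omega(\bar T,\mathcal S(T))|\bigr)$ mod $2$, which reduces to $\lambda_{CG}$ on $\Sigma_6$. Note, though, that a single crossing still moves two $|\omega|$'s lying in different $\Omega_i$ by $\pm1$, so crossing-counting alone does not pin $\Lambda$ down either.

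The missing input is local at a suspension point. The disk $T*N$ bounded by $T$ meets $\Sigma T'$ only at $N$, because faces $e*N$ and $e'*N$ with $e\cap e'=\emptyset$ share only $N$. In a small ball $B$ about $N$, the complex is (up to isotopy) the cone on its link $L=\partial B\cap\Sigma K_6$, which is an embedded $K_6$ in $S^3$. Likewise $\Sigma T'\cap B$ is the cone on the copy $T'_L$ of $T'$ in $L$. Replacing $(T*N)\cap B$ by a generic surface in $B$ bounded by $T_L$ gives $\operatorname{lk}(T,\Sigma T')=\pm\operatorname{lk}_{\partial B}(T_L,T'_L)$ for \emph{every} embedding.

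Conway--Gordon applied to $L$ then produces a pair with odd linking number directly, with no homotopy analysis at all. This also explains why your $10$-term sum is in fact always $\equiv 1$, while the symmetric one is always $\equiv 0$.

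\textbf{Minors.} The same argument handles minors. The link of $N$ in $\Sigma G$ is an embedded copy of $G$, and every spatial embedding of a graph with a $K_6$ minor has disjoint cycles $C,C'$ with odd linking number, giving $\operatorname{lk}(C,\Sigma C')\neq 0$. This matters, because your contraction step does not work as sketched: the disk $\Sigma e$ passes through both suspension points, where every $2$-cycle meets, so collapsing it is not a local operation.
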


It is known  that there exist intrinsically linked $n$-complexes in both $\mathbb{R}^{2n}$ and $\mathbb{R}^{2n+1}$ for any integer $n>0$. Examples covering all odd ambient dimension include those of Lov{\'a}sz and Schrijver \cite{lovasz}, Taniyama \cite{tani}, 
Skopenkov \cite{skope}, Tuffley \cite{Tuff} and most recently Nikkuni \cite{nikkuni}. A largely disjoint collection of work provides examples in the case of even ambient dimension, and addresses a related issue: whereas every $n$-complex embeds in $\mathbb{R}^{2n+1}$, not every $n$-complex embeds into $\mathbb{R}^{2n}$. Non-embeddability into $\mathbb{R}^{2n}$ occurs if and only if the complex has a non-zero Van Kampen obstruction  \cite{Kampen}, when $n \not = 2$. This fact was reproved by Freedman, Krushkal, and Teichner in \cite{freedman1994van} and shown \emph{not} to hold in the case $n=2$. Their construction involves the first example of an intrinsically linked $2$-complex in $\mathbb{R}^4$. 

In subsequent work, Freedman and Krushkal \cite{FreedmanKrushkal2014} were first to produce examples of intrinsically linked $n$-complexes in all even dimensions, and moreover, examples of $2$-complexes such that each embedding into $\mathbb{R}^4$ contains linked cycles with non-vanishing higher order linking numbers. These examples were recently studied further in \cite{komendarczyk2025milnorinvariantsthicknessspherical}. 
In 2025, Nikkuni \cite{nikkuni2} constructed \emph{minimal} intrinsically linked simplicial $n$-complexes in each 
$\mathbb{R}^{2n}$. Our examples are distinct from those originally presented by Nikkuni; in fact, shortly after the first version of our paper appeared, Nikkuni \cite{nikkuni2026intrinsiclinkingsimplicialncomplexes} not only generalized our main example to higher dimensions, but also proved that this example are minimal. 

 Our proof of an intrinsically linked $2$-complex involves a mod $4$ invariant based on the mod $2$ invariant originally used by Conway and Gordon to prove the intrinsic linkedness of $K_6$ in \cite{CandG}. Rather than totaling the linking numbers over each pair of linked $1$-cycles of an embedded graph, we tally the linking numbers over each pair of linked $1$ and $2$-cycles in an embedded $2$-complex. 

In Section 2, we cover necessary background on classical results involving intrinsically linked spatial graphs. In Section 3, we introduce intrinsically linked 2-complexes in $\mathbb{R}^4$ and present our main construction and proof. A short section of possible future directions is also included for the interested reader. 
\\

\noindent
{\bf Acknowledgments.} 
All four authors would like to thank the North Carolina School of Science and Mathematics, where they conducted this research, as well as our Dean of Mathematics Beth Bumgardner for supporting higher level research in public high school. They would also like to thank Slava Krushkal, Ryo Nikkuni, and Fedor Manin for helpful feedback on the initial draft. During this project, the author HS was supported by NSF grant DMS-1502525. 

\section{Necessary Background}

Our proof of the existence of intrinsically linked $2$-complexes in $\mathbb{R}^4$ will mimic much of Conway and Gordon's original strategy from \cite{CandG}, shifting their argument up by one dimension.

\begin{definition}
    A graph is \emph{intrinsically linked} in $\mathbb{R}^3$ if, for every possible embedding into $\mathbb{R}^3$, there exist two disjoint cycles within the graph that form a non-trivial link. 
\end{definition}

\begin{figure}\label{k6pic}
    \centering
\includegraphics[width=0.8\linewidth]{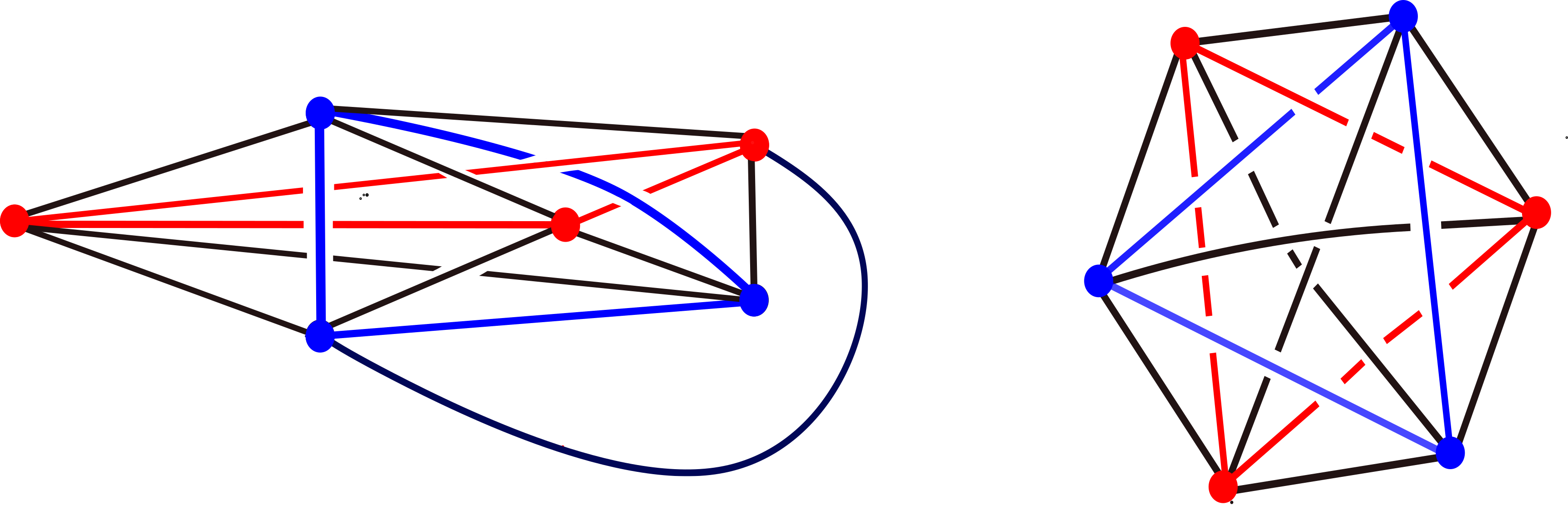}
    \caption{Two distinct spatial embeddings in $\mathbb{R}^3$ of the complete graph $K_6$. One pair of non-trivially linked $1$-cycles is highlighted in red and blue for each embedding.}
    \label{K6pic}
\end{figure}

Conway and Gordon \cite{CandG}, and independently Sachs \cite{sachs}, proved the following foundational result:

\begin{theorem}[Conway and Gordon, 1983; Sachs, 1984]
\label{intrinsically_linked_graphs}
Every spatial embedding of the complete graph $K_6$ contains a non-trivial link.
\end{theorem}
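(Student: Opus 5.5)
We will follow the classical Conway--Gordon argument, which rests on a mod $2$ invariant that does not change under any change of embedding. For a spatial embedding $f$ of $K_6$, we define
\[
\lambda(f) = \sum \operatorname{lk}(C_1, C_2) \pmod 2,
\]
where the sum runs over all ten unordered pairs $\{C_1, C_2\}$ of disjoint $3$-cycles of $K_6$. Each such pair partitions the six vertices into two triples. The plan is to show that $\lambda(f)=1$ for every embedding $f$. Then some pair of triangles has odd linking number, and that pair is a non-split, hence non-trivial, link.

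\textbf{Step 1: $\lambda$ is an invariant of $K_6$.} Any two spatial embeddings of $K_6$ are related by an ambient isotopy together with finitely many crossing changes. A crossing change between two edges of the same edge or of adjacent edges changes no linking number of disjoint cycles, since such edges never lie in two disjoint cycles simultaneously. Now suppose the crossing change is between disjoint edges $e=ab$ and $e'=cd$. The linking number $\operatorname{lk}(C_1,C_2)$ changes by $\pm 1$ exactly when $e \subset C_1$ and $e' \subset C_2$ (or vice versa). So we count the pairs of disjoint triangles with one containing $ab$ and the other containing $cd$. The third vertex of the triangle through $ab$ must be one of the two remaining vertices $\{x,y\}$, and then the other triangle is forced to be $cd$ together with the leftover vertex. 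This gives exactly $2$ pairs. The total change is therefore even, so $\lambda(f)$ is unchanged mod $2$.

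\textbf{Step 2: evaluate $\lambda$ on one embedding.} We take a standard embedding, for instance the left embedding in Figure~\ref{K6pic}, drawn so that exactly one pair of disjoint triangles forms a Hopf link and every other pair is split. We verify this from a diagram by computing linking numbers from signed crossings between the edges of each of the ten pairs. This gives $\lambda = 1$, and by Step 1, $\lambda(f)=1$ for every embedding $f$.

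The main obstacle is Step 2, namely checking all ten pairs in the explicit diagram. It is a finite bookkeeping exercise: we choose a planar-like drawing with only one crossing between disjoint edges, so that only one pair of triangles can have nonzero linking, and that crossing contributes $\pm 1$ to it.
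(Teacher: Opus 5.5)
Your two-step structure is the Conway--Gordon outline that the paper recalls (the paper does not reprove this theorem; it only cites it). Your Step 1 is correct. The gap is in Step 2, which you yourself identify as the crux, because the concrete plan you give for it cannot be carried out.

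A generic planar drawing of $K_6$ with only one crossing between disjoint edges does not exist. The projections of two disjoint triangles are two closed curves in the plane, so they cross an even number of times. But by your own count in Step 1, a crossing between disjoint edges $e,e'$ lies in exactly two triangle pairs, and each of those pairs would then have exactly one crossing between its components. The same count contradicts your claim that ``only one pair of triangles can have nonzero linking.'' Also, a single crossing contributes $\pm\frac12$, not $\pm 1$, to a linking number.

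Nor can you rely on Figure~\ref{K6pic}: the paper only highlights one linked pair there and asserts nothing about the other nine. An embedding with exactly one Hopf-linked pair does exist, but nothing in your argument produces or verifies one. So $\lambda\neq 0$ is never established, and the invariance from Step 1 proves nothing on its own.

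The fix is a genuine finite check on an explicit drawing. For example:
\begin{itemize}
\item Take a large triangle $A_1A_2A_3$ and a much smaller concentric triangle $B_1B_2B_3$, with $B_i$ on the ray from the center through $A_i$, and draw all edges as straight segments.
\item The only crossings are between $A_iB_j$ and $A_jB_i$ for $i<j$, three in total.
\item These crossings lie in exactly three triangle pairs, $\{A_1,A_3,B_2\}\,|\,\{A_2,B_1,B_3\}$, $\{A_1,B_2,B_3\}\,|\,\{A_2,A_3,B_1\}$ and $\{A_1,A_2,B_3\}\,|\,\{A_3,B_1,B_2\}$, each containing two of them.
\item The other seven pairs have no crossings between their components, so their linking number is $0$.
\end{itemize}
A direct sign computation then shows that, for every choice of over/under data, an odd number of these three pairs have linking number $\pm 1$; for a suitable choice, exactly one does. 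This gives $\lambda=1$.
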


One example of an embedding of $K_6$ is illustrated in Figure \ref{K6pic}, with a pair of non-trivially linked cycles highlighted. Indeed, no matter the embedding, at least one pair of length 3 cycles will be non-trivially linked. Pairs of triangles are in fact the only kind of disjoint cycles in $K_6$. To see this, note that each component of the link must have length at \emph{least} 3 (this is the minimum length of a cycle in $K_6$). However, if either has length \emph{greater} than 3, it precludes any disjoint cycle from having this minimum length.

\begin{definition}
We call each length 3 cycle $T$ of $K_6$ a \emph{triangle}, and the unique disjoint length 3 cycle (the union of the edges between the three vertices not used in the first triangle) its \emph{dual triangle} $\bar T$. 
\end{definition}

There are exactly 10 distinct pairs of dual triangles in $K_6$. For a fixed embedding of $K_6$, Conway and Gordon extract an invariant $\lambda$ counting the linking number over all such pairs of triangles. 

\begin{definition}\label{cgdef}
    Let $h : K_6 \to \mathbb{R}^3$ be any embedding of $K_6$. Define
$$\lambda \equiv \sum_{i=1}^{10}{\omega_i }\pmod{2}$$
where $\omega_i$ is the linking number in $\mathbb{R}^3$ of the embedded links $h(T \sqcup \bar{T})$ for each of the 10 pairs of dual triangles $T$ and $\bar T$.
\end{definition} 

Note that in order to be well-defined, the tally must be taken modulo 2 as the graph itself is not equipped with an orientation. Conway and Gordon's proof from \cite{CandG} that any embedding of $K_6$ contains a non-trivial link consists of two main steps:

\begin{enumerate}
    \item Show that $\lambda(h)$ is non-vanishing for a fixed embedding $h$ of $K_6$, and that the intrinsic linkedness of the graph follows directly from this fact.
    \item Show that $\lambda$ does not depend on the embedding  $h$, but is in fact an invariant of the abstract graph $K_6$.
\end{enumerate}

It also follows that any graph containing $K_6$ as a minor is also intrinsically linked. Our proof of the existence of an intrinsically linked $2$-complex in the next section follows largely the same outline.

\section{Intrinsically Linked 2-Complexes in $\mathbb{R}^4$}

We first generalize the notion of intrinsic linking to one appropriate for ambient dimension $4$, where the linking dimensions are $1$ and $2$. 

\begin{definition} \label{IL2cx}
    We call a $2$-complex $\mathcal{C}$ \emph{intrinsically linked} in $\mathbb{R}^4$ if, for every possible embedding into $\mathbb{R}^4$, there exists a non-trivial link $C_1 \sqcup C_2 \subset \mathcal{C}$ within the complex, where $C_1$ is a $1$-cycle homeomorphic to a circle, and $C_2$ is a $2$-cycle homeomorphic to a closed, orientable surface. 
\end{definition}

Note that for technical reasons appearing in our main argument, as well as to streamline the setting, we restrict our attention to \emph{piecewise differentiable} embeddings of $2$-complexes. All ambient isotopies we consider between such complexes will be smooth. In contrast to dimension $3$, care must be taken to consider the differences between the smooth and topological categories. 

Our definition above differs slightly from its analogs in the previous related sources \cite{freedman1994van}, \cite{FreedmanKrushkal2014}, and \cite{nikkuni2} mentioned in the introduction: we choose to allow the $2$-cycle $C_2$ in the definition above to be a closed, orientable surface -- rather than specifically a $2$-sphere -- only because our proof works in either case. A basic understanding of computing the linking number between oriented curves and surfaces in $\mathbb{R}^4$ will be critical for our proofs that follow. Below, we shall detail the meaning of the linking number of a two component link of the form $C_1 \cup C_2 \subset \mathbb{R}^4$ as in Definition \ref{IL2cx}. Similar to our notation from the previous section, we continue to denote this integer by $\omega(C_1, C_2)$. 

As in the classical 3D case, this isotopy invariant of a link can be thought of homologically. Since $C_2$ has codimension two in $\mathbb{R}^4$, by Alexander duality  $H_1(\mathbb{R}^4-C_2) \simeq \mathbb{Z}$. An isomorphism is determined by choosing an orientation on the surface $C_2$ -- this specifies a ``positively oriented" meridian to $C_2$ that generates $H_1(\mathbb{R}^4-C_2)$, see Figure \ref{crossingchangehomotopy} for example. The linking number $\Omega(C_1, C_2)$ is the integer representing the class $[C_1] \in H_1(\mathbb{R}^4-C_2)$. The sign of the linking number of course also necessitates choosing an orientation on the curve $C_1$. Stated geometrically, the linking number can be thought of as the number of times (up to homotopy away from $C_2$) that the oriented curve $C_1$ wraps around the positive meridian of the surface $C_2$.

To find an intrinsically linked $2$-complex as in Definition \ref{IL2cx}, we begin by considering the suspension $\mathcal S(K_6)$ of the complete graph $K_6$, thought of as a $2$-complex with $1$-skeleton equal to the join of $K_6$ with two ``distinguished" vertices $a$ and $b$. The faces of the $2$-complex arise in pairs that correspond to each edge $e$ of $K_6$: the first face in each pair cones the edge $e$ to the vertex $a$, while the second face cones $e$ to the vertex $b$. An embedded version of this suspension is illustrated in Figure \ref{sigmadiagram}.

The close relationship between $K_6$ and its suspension allows us to utilize many properties of the former in our proof of the intrinsic linkedness of the latter. For instance, we can characterize all disjoint $1$ and $2$-cycles in the suspension, just as we characterized all disjoint pairs of $1$-cycles in $K_6$. 

\begin{lemma} \label{trianglelemma}
A $1$-cycle and $2$-cycle in the suspension $S(K_6)$ are disjoint if and only if the $1$-cycle is equal to a triangle $T$ in $K_6$, and the $2$-cycle is the suspension $S(\bar{T})$ of the dual triangle $\bar{T}$.
\end{lemma}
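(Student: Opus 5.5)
The plan is to first pin down what the $2$-cycles in $S(K_6)$ look like, and then use the vertex count of $K_6$ to force the structure. Here a $2$-cycle means a subcomplex homeomorphic to a closed orientable surface, and a $1$-cycle a subcomplex homeomorphic to a circle, as in Definition \ref{IL2cx}. Every face of $S(K_6)$ is a cone $a*e$ or $b*e$ over an edge $e$ of $K_6$; there are no faces spanned only by $K_6$ edges.

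Let $\Sigma$ be a closed surface subcomplex. Consider the link of the vertex $a$ in $\Sigma$: it is the union of the edges $e$ with $a*e\subset\Sigma$, and since $\Sigma$ is a surface it must be a disjoint union of circles in $K_6$ (empty if $a\notin\Sigma$). The same holds at $b$.

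Next I show that each edge $e\subset\Sigma$ lying in $K_6$ has exactly two incident faces in $\Sigma$. These can only be $a*e$ and $b*e$. Hence the links of $a$ and $b$ coincide, and this common link is a union of cycles $G\subset K_6$ with $\Sigma=S(G)$. The set $G$ is nonempty, since otherwise $\Sigma$ has no faces. If $G$ had two or more components, the points $a$ and $b$ would be non-manifold points: the link of $a$ must be a single circle. So $G$ is a single cycle of $K_6$, and $\Sigma=S(G)$ is a sphere containing both $a$ and $b$.

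Now suppose a $1$-cycle $C$ is disjoint from $\Sigma=S(G)$. Then $C$ avoids $a$ and $b$, so it is a cycle of $K_6$ itself, and $C$ avoids every vertex of $G$. A cycle in $K_6$ has length at least $3$, and the two cycles use disjoint vertex sets among only six vertices. So both have length exactly $3$, exactly as in the argument in Section 2: $C=T$ is a triangle and $G$ is the triangle on the other three vertices, i.e.\ $G=\bar T$. For the converse, $T$ and $S(\bar T)$ share no vertices, and every edge and face of $S(\bar T)$ lies in the closed star of $\bar T$ together with $a,b$. The only simplices of $S(\bar T)$ are $a,b$, vertices and edges of $\bar T$, cone edges from $a,b$ to $\bar T$, and cone faces, none of which meets $T$. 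Hence they are disjoint.

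The main obstacle is the middle step, ruling out $2$-cycles not of the form $S(\text{cycle})$. The trickiest case is a $\Sigma$ whose vertex link at $a$ is several disjoint cycles. In $K_6$ there is no room for two disjoint cycles together with a third cycle, and the two-component case is excluded by the manifold condition at $a$. I would handle this carefully via the local link condition rather than counting, since the paper also allows higher-genus surfaces. I expect, though, that the argument above shows only spheres occur.
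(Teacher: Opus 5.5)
Your proof is correct and follows the same route as the paper's. Both arguments show that every $2$-cycle contains $a$ and $b$ and meets $K_6$ in a cycle, which forces the $1$-cycle into $K_6$, and then use the six-vertex count to force a pair of dual triangles. The paper simply asserts that every $2$-cycle contains $a$ and $b$ and meets $K_6$ in a $1$-cycle; your edge-degree and vertex-link argument actually proves that every closed surface subcomplex is $S(G)$ for a single cycle $G$, so only spheres occur. You also verify the converse direction, which the paper omits.
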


\begin{proof}
Let $C_1$ be a $1$-cycle of $S(K_6)$ disjoint from a $2$-cycle $C_2$. If $C_1 \not \subset K_6$, then this cycle must contain either $a$ or $b$, as those are the only two other points. However, all $2$-cycles of the suspension also contain both $a$ and $b$, contradicting the fact that $C_1$ and $C_2$ are disjoint. Therefore, $C_1 \subset K_6$. Moreover, $C_1$ must have length $3$: this is certainly the minimum length for any $1$-cycle, but also the maximum for this specific one. This holds since the $2$-cycle $C_2$ intersects $K_6$ in a $1$-cycle disjoint from $C_1$, and this cycle cannot coincide with those of $C_1$. 
\end{proof}

 This means that each of the 10 links of the form $T \cup \bar T$ in the embedding of $K_6$ corresponds to a \emph{pair} of linked cycles in $S(K_6)$, both $T \cup \mathcal{S}(\bar T)$ and $\bar T \cup \mathcal{S}(T)$. 
 
\begin{definition} \label{4dinvt} Given any embedding of $\mathcal S(K_6)$ into $\mathbb{R}^4$, define

$$\Lambda 
\equiv_2 \sum_{i=1}^{10}{\Omega_i}\pmod{2}$$
where the index $i$ ranges over all distinct pairs of dual triangles $T \cup \bar T$ in $K_6$, and each term $\Omega_i$ is equal to the sum $\frac{1}{2}(|\omega(T,\mathcal{S}(\bar T))| + |\omega(\bar T, \mathcal{S}(T))|)$. 
\end{definition}

This new invariant is similar to $\lambda$ from Definition \ref{cgdef}, but some key differences will be relevant. First, note that in this case, taking the absolute value of each linking number $\omega$ guarantees that $\Lambda$ is well-defined regardless of the arbitrary orientation given to each link in the $2$-complex. Meanwhile, the sum is considered modulo $2$ with a scaling factor of $\frac{1}{2}$ in order to obtain a nontrivial invariant regardless of the embedding. 

We shall work with a specific embedding $\Sigma_6: S(K_6) \hookrightarrow
\mathbb{R}^4$ constructed as follows. First, embed the copy of $K_6$ in the $1$-skeleton of $S(K_6)$ into the level set $\mathbb{R}^3 \times \{0\} \subset \mathbb{R}^4$. This can be done in many ways, and our proof is independent of this choice. Next, place the vertex $a$ into $\mathbb{R}^3 \times \{1\} \subset \mathbb{R}^4$ and $b$ into $\mathbb{R}^3 \times \{-1\} \subset \mathbb{R}^4$. Each face of the $2$-complex is embedded by coning an edge of $K_6$ either up to $a$ or down to $b$, as shown in Figure \ref{sigmadiagram}. Thus, at every level $\mathbb{R}^3 \times \{w\}$ for $w\in (-1, 1)$ there exists an embedded copy of $K_6$, until reaching levels $\pm 1$ where the vertices $a$ and $b$ are placed.  

\begin{figure}[h]
    \centering
\includegraphics[width=0.8\linewidth]{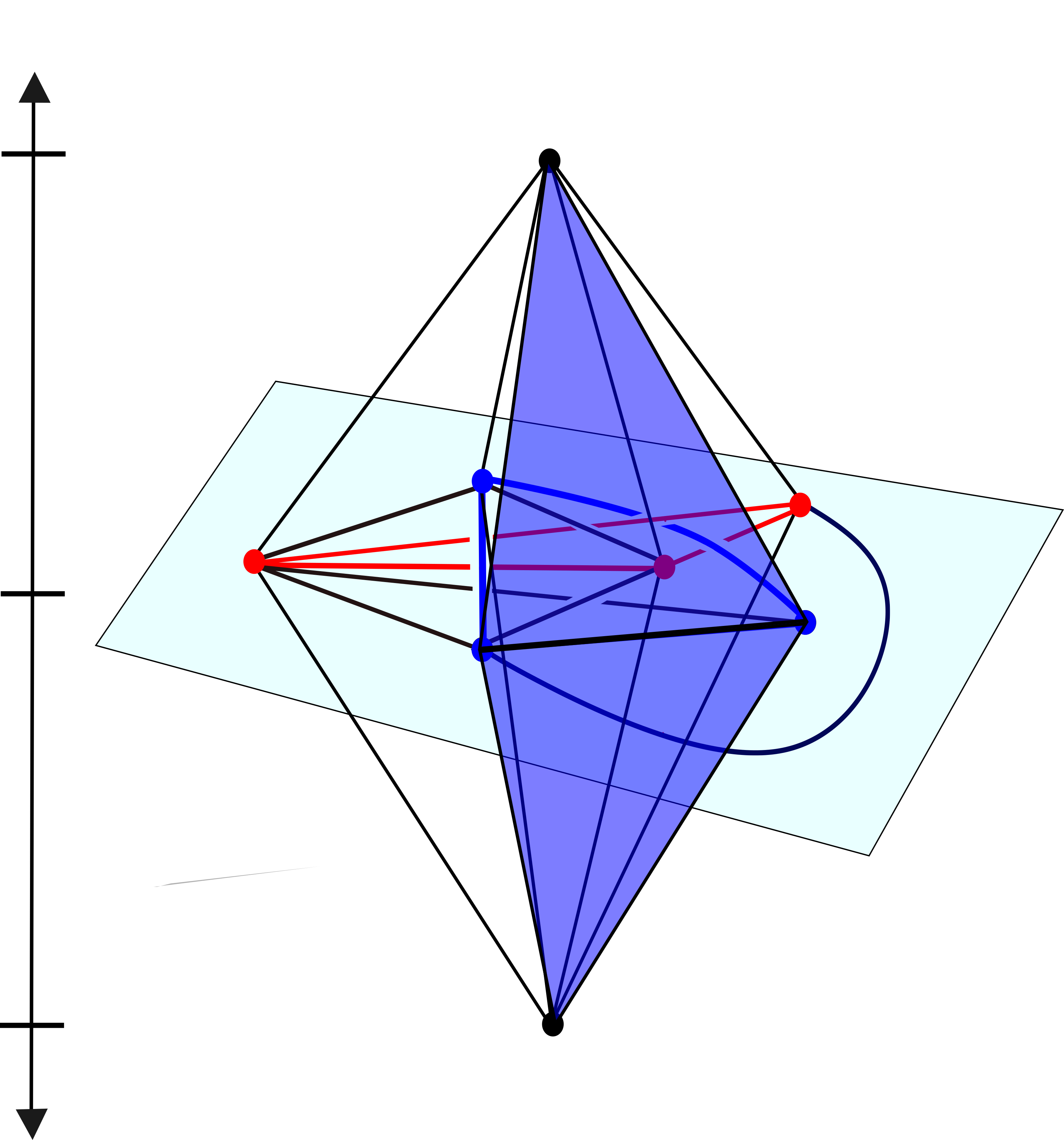}
\put(-285,139){$0$}
\put(-292,28){$-1$}
\put(-285,253){$1$}
\put(-130,259){$a$}
\put(-130,19){$b$}
    \caption{The embedding $\Sigma_6$ of the suspension $S(K_6)$ into $\mathbb{R}^4$. The slice of the embedding $\Sigma_6$ contained in  $\mathbb{R}^3 \times \{0\}$ is the sub-complex $K_6$, a subset of the $1$-skeleton of our cell structure on $S(K_6)$. The faces of $S(K_6)$ are embedded by coning each edge of $K_6$ up and down to the points $a$ and $b$, pictured at heights $1$ and $-1$ respectively. Examples of two such faces are shaded in the figure.}
    \label{sigmadiagram}
\end{figure}

\begin{lemma}\label{lem1}
The value of $\Lambda$ is non-trivial for $\Sigma_6$. Hence at least one pair of cycles in $\Sigma_6$ has non-zero linking number in $\mathbb{R}^4$.
\end{lemma}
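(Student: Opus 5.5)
The plan is to compute each linking number $\omega(T,\mathcal{S}(\bar T))$ in the embedding $\Sigma_6$ directly, and to show that it equals the classical linking number $\omega(T,\bar T)$ of the triangles inside $\mathbb{R}^3\times\{0\}$, up to sign. Then $\Omega_i=|\omega(T_i,\bar T_i)|$, because both terms in the average agree and linking number in $\mathbb{R}^3$ is symmetric. Hence $\Lambda\equiv\sum_i|\omega(T_i,\bar T_i)|\equiv\sum_i\omega(T_i,\bar T_i)\equiv\lambda \pmod 2$. By the result of Conway and Gordon stated above, $\lambda=1$ for every spatial embedding of $K_6$, and in particular for the copy of $K_6$ at level $0$. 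So $\Lambda\neq 0$, and some $\Omega_i$, hence some $\omega$, is nonzero.

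The key step is the identity $|\omega(T,\mathcal{S}(\bar T))|=|\omega(T,\bar T)|$. Write $\Sigma=\mathcal{S}(\bar T)$ for the embedded sphere. It consists of the cones over $\bar T$ to $a$ at height $1$ and to $b$ at height $-1$, and it is a 2-sphere meeting $\mathbb{R}^3\times\{0\}$ transversely in $\bar T$. The curve $T$ lies in $\mathbb{R}^3\times\{0\}$.

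I would compute $\omega$ as an intersection number. Choose a Seifert surface $F\subset\mathbb{R}^3$ for $T$, in general position with respect to $\bar T$, and push it slightly into the region $w\ge 0$ so that it is a surface bounded by $T$ in $\mathbb{R}^4$. Then $\omega(T,\Sigma)$ is the signed count of $F\cap\Sigma$. The points of intersection are exactly the points of $F\cap\bar T$ at level $0$, since the cone part of $\Sigma$ near level $0$ is $\bar T\times(-\epsilon,\epsilon)$ up to scaling. Each of these points is transverse, and with fixed orientations each contributes the same local sign as in $\mathbb{R}^3$. So $\omega(T,\Sigma)=\pm\,\#(F\cap\bar T)=\pm\,\omega(T,\bar T)$.

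A cleaner variant is to use the meridian description. A meridian of the arc $\bar T$ in $\mathbb{R}^3\times\{0\}$ is also a meridian of $\Sigma$ in $\mathbb{R}^4$, because $\Sigma$ is locally $\bar T\times\mathbb{R}$ there. Consequently the inclusion $\mathbb{R}^3\setminus\bar T\to\mathbb{R}^4\setminus\Sigma$ sends generator to generator on $H_1$, so $[T]$ maps to $\pm\omega(T,\bar T)$ times the generator.

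The main obstacle is verifying this meridian or intersection correspondence rigorously, given that $\Sigma$ is only piecewise smooth and has cone points at $a$ and $b$. I would handle the technicalities by keeping all the geometry near level $0$. For the meridian version, the inclusion map on $H_1$ is determined by where a single meridian goes, and a small meridian of $\bar T$ near level $0$ is visibly a meridian of $\Sigma$. The cone points are far from level $0$ and play no role. Symmetry of the argument also gives $|\omega(\bar T,\mathcal{S}(T))|=|\omega(\bar T,T)|=|\omega(T,\bar T)|$, which completes the reduction to $\lambda$.
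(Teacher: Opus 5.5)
Your proposal is correct and follows the paper's own route. The paper likewise observes that a meridian of $\bar T$ in $\mathbb{R}^3\times\{0\}$ is also a meridian of $\mathcal{S}(\bar T)$, which gives $|\omega(T,\mathcal{S}(\bar T))| = |\omega(\bar T,\mathcal{S}(T))| = |\omega(T,\bar T)|$, hence $\Omega_i = |\omega_i| \equiv \omega_i \pmod 2$ and $\Lambda \equiv \lambda \equiv 1$. Your Seifert-surface intersection count is an equivalent reformulation of that meridian argument, and the push into $w \ge 0$ is not needed: $F \subset \mathbb{R}^3\times\{0\}$ already meets the cone transversely, exactly in $F\cap\bar T$.
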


\proof 
For this particular embedding, if $T \cup \bar T \subset K_6$ is a link with non-zero linking number in $\mathbb{R}^3$, then both $T \cup \mathcal{S}(\bar T)$ and $\bar T \cup \mathcal{S}(T)$ are non-trivially linked in $\mathbb{R}^4$. Indeed, the absolute values of the linking numbers $\omega(T,\mathcal{S}(\bar T))$ and $\omega(\bar T, \mathcal{S}(T))$ in $\mathbb{R}^4$ are both equal to the absolute value of the linking number $\omega(T,\bar T)$ of the link $T \cup \bar T$ in $\mathbb{R}^3$. This follows from the discussion on linking number presented below Definition \ref{IL2cx}, noting that an oriented meridian of $T$ or $\bar T$ in $\mathbb{R}^3 \times \{0\}$ is also a meridian of the suspension $\mathcal S(T)$ or $\mathcal S(\bar T)$, respectively.

 Therefore, $\Lambda \equiv_2\lambda$, since $\Omega_i \equiv_2 \omega_i$ for each $i\in \{1, \dots, 10\}$. Thus, $\Lambda \equiv_2 1$, as we already know that $\lambda\equiv_21$ by Conway and Gordon \cite{CandG}. \qed 

\begin{theorem} \label{mainproof}
     The value of $\Lambda$ does not depend on the (piecewise) smooth embedding of $\mathcal S(K_6)$ into $\mathbb{R}^4$. 
\end{theorem}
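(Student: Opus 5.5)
We will follow the second step of Conway and Gordon's argument, moved up one dimension. Any two piecewise smooth embeddings $f_0, f_1$ of $\mathcal S(K_6)$ into $\mathbb{R}^4$ are homotopic, for instance by the straight-line homotopy. After a small perturbation this homotopy becomes generic. In a generic homotopy of a $2$-complex in $\mathbb{R}^4$, the maps remain embeddings except at finitely many times. At each such time, a $1$-cell and a $2$-cell pass transversally through each other at a single point; this is the $4$-dimensional analogue of a crossing change between two edges in $\mathbb{R}^3$. All other singular events either involve cells that share a vertex or are non-generic. Between these times the isotopy type is constant, so every linking number $\omega(C_1,C_2)$ is unchanged. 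It therefore suffices to show that $\Lambda$ is unchanged modulo $2$ by a single ``crossing change'' between an edge $e$ and a face $F$. Here $e$ and $F$ are disjoint cells, since a crossing of adjacent cells does not alter linking numbers of disjoint cycles.

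Next we compute the effect of one crossing change, using the homological description of linking number given after Definition \ref{IL2cx}. If $e$ passes once through $F$, then for every disjoint pair $C_1 \supset e$ and $C_2 \supset F$ the class $[C_1]\in H_1(\mathbb{R}^4 - C_2)$ changes by a meridian of $C_2$. Hence $\omega(C_1,C_2)$ changes by exactly $\pm 1$, and all other linking numbers are unchanged. We then write $F$ as the cone on an edge $e'$ of $K_6$ to $a$ or to $b$. By Lemma \ref{trianglelemma}, the only pairs that can change are those with $C_1 = T$ a triangle in $K_6$ and $C_2 = \mathcal S(\bar T)$, where $e\subset T$ and $e'\subset \bar T$. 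This forces $e$ to be an edge of $K_6$, since an edge through $a$ or $b$ lies in no $1$-cycle disjoint from a $2$-cycle. It also forces $e$ and $e'$ to be disjoint edges of $K_6$.

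The counting follows Conway and Gordon. Given disjoint edges $e=\{v_1v_2\}$ and $e'=\{v_3v_4\}$ among the six vertices, the remaining vertices are $v_5, v_6$. The triangle pairs with $e\subset T$ and $e'\subset\bar T$ are exactly $T=v_1v_2v_5$ with $\bar T = v_3v_4v_6$, and $T=v_1v_2v_6$ with $\bar T=v_3v_4v_5$. So exactly two of the twenty terms $|\omega|$ appearing in $\Lambda$ change, each by $\pm1$. These two terms lie in two different summands $\Omega_i$, each changing by $\pm\frac12$. The total $\sum \Omega_i$ therefore changes by $\pm1$ or $0$. This is the main obstacle: the naive count gives a change of $1$ rather than an even number. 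The absolute values and the factor $\tfrac12$ make the parity bookkeeping delicate.

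To handle this obstacle, we would first try to show that the two affected linking numbers change in a correlated way. The idea is to compare $\omega(v_1v_2v_5,\mathcal S(v_3v_4v_6))$ and $\omega(v_1v_2v_6,\mathcal S(v_3v_4v_5))$, together with their partners in the same $\Omega_i$. If needed, we would also use a parity constraint on each $\Omega_i$: we would show that $\omega(T,\mathcal S(\bar T)) \equiv \omega(\bar T,\mathcal S(T)) \pmod 2$ for every embedding, so that each $\Omega_i$ is an integer and $\Lambda$ is well defined. If this congruence holds, a single crossing change alters one term of exactly two summands by $\pm1$, which breaks that parity. This suggests that crossing changes must actually come in pairs. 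The mechanism would be that each generic crossing of $e$ through a cone face over $e'$ to $a$ is accompanied by the symmetric event with the cone to $b$, because $\mathcal S(\bar T)$ contains both cones. It could also come from a mod-$2$ intersection count: the curve $T$ swept by the homotopy meets the surface $\mathcal S(\bar T)$, a closed surface, an even number of times overall. We would make this precise by tracking the $\pm1$ changes over the whole homotopy rather than one event at a time. The aim is to show that the total change of $\sum|\omega|$ is divisible by $4$, which gives the invariance of $\Lambda$ modulo $2$.
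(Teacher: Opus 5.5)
\textbf{There is a genuine gap.} Your proposal stops exactly at the step that carries the proof. You correctly find that one crossing of $e=v_1v_2$ through $F=\mathrm{cone}_a(v_3v_4)$ changes only $\omega(v_1v_2v_5,\mathcal S(v_3v_4v_6))$ and $\omega(v_1v_2v_6,\mathcal S(v_3v_4v_5))$, each by $\pm1$. So $\sum\Omega_i$ can jump by $1$.

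What follows in your proposal is not an argument, and both concrete mechanisms you suggest are false:
\begin{itemize}
\item Nothing forces a crossing through $\mathrm{cone}_a(e')$ to be paired with one through $\mathrm{cone}_b(e')$. The compensation can instead come from $e'$ crossing $\mathrm{cone}_b(e)$.
\item The net algebraic number of crossings of $T$ through $\mathcal S(\bar T)$ equals the net change of $\omega(T,\mathcal S(\bar T))$, and this can be odd. For example, take the two embeddings $\Sigma_6$ built from spatial $K_6$'s that differ by one crossing change of $v_1v_2$ with $v_3v_4$.
\end{itemize}

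The source of the trouble is your first paragraph. A generic homotopy of a $2$-complex in $\mathbb{R}^4$ is \emph{not} through embeddings away from finitely many times. Two faces generically meet in isolated double points, and these persist over open time intervals; the paper says this explicitly and extends $\Lambda$ to such immersions. Linking numbers of disjoint cycles still change only at edge--face crossings, so your local count is right. But the face--face double points such a crossing creates must all be removed before the homotopy ends at an embedding. That global requirement is what forces compensating events, so no argument that inspects one crossing at a time can succeed.

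In particular, the paper's local device for this step does not resolve it either. It slides the crossing over an endpoint $p$ of $E$ and trades it for crossings with $E_1,E_2$. But that deformation leaves the complex unchanged at both ends of its time interval. Its net effect on the twenty linking numbers is therefore exactly the one you computed: the two changes to $\omega(v_1v_5v_6,\mathcal S(v_2v_3v_4))$ cancel.

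\textbf{The missing idea} is a direct computation at an arbitrary embedding $f$, with no homotopy at all.
\begin{itemize}
\item Let $N$ be a small ball about $f(a)$ meeting the complex in a cone on its link $L_a=f(\mathcal S(K_6))\cap\partial N$. This link is an embedded copy of $K_6$ in $\partial N\cong S^3$.
\item Let $aT$ be the cone on $T$ to $a$, and put $T_a=\partial N\cap f(aT)$.
\item The annulus $f(aT)\setminus\mathrm{int}\,N$ misses $f(\mathcal S(\bar T))$, so $T$ is homologous to $T_a$ in the complement of $f(\mathcal S(\bar T))$.
\item Cap $T_a$ with a Seifert surface pushed into $N$. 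Inside $N$, the surface $f(\mathcal S(\bar T))$ is just the cone on $\bar T_a$, and the pushed-in surface meets this cone algebraically $\mathrm{lk}(T_a,\bar T_a)$ times. Hence $\omega(T,\mathcal S(\bar T))=\pm\,\mathrm{lk}_{\partial N}(T_a,\bar T_a)$.
\item Exchanging the roles of $T$ and $\bar T$ gives $\omega(\bar T,\mathcal S(T))=\pm\,\mathrm{lk}_{\partial N}(T_a,\bar T_a)$ as well.
\end{itemize}
Hence $\Omega_i=|\mathrm{lk}(T_a,\bar T_a)|$ is an integer for every embedding, which is a stronger form of the congruence you conjectured. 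Then $\Lambda\equiv\lambda(L_a)\equiv 1\pmod 2$ by Conway--Gordon applied to $L_a$. This proves the theorem outright. It also explains your obstacle: at every embedding the two linking numbers in each $\Omega_i$ agree up to sign. A single edge--face crossing breaks this constraint, and the rest of the homotopy is forced to restore it.
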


\proof Take an embedding $\Sigma'_6$ not isotopic to our standard one $\Sigma_6$. These embeddings are related by a homotopy from $\Sigma_6'$ to $\Sigma_6$. First, since homotopy implies isotopy for curves $\mathbb{R}^4$, the $1$-skeleton of $\Sigma'_6$ can be isotoped to the $1$-skeleton of $\Sigma_6$. This isotopy extends to one taking $\Sigma'_6$ to a new embedding $\Sigma''_6$ of $\mathcal S(K_6)$ whose faces remain distinct from those of $\Sigma_6$. 

\begin{figure}[h]
    \centering
\includegraphics[width=0.8\linewidth]{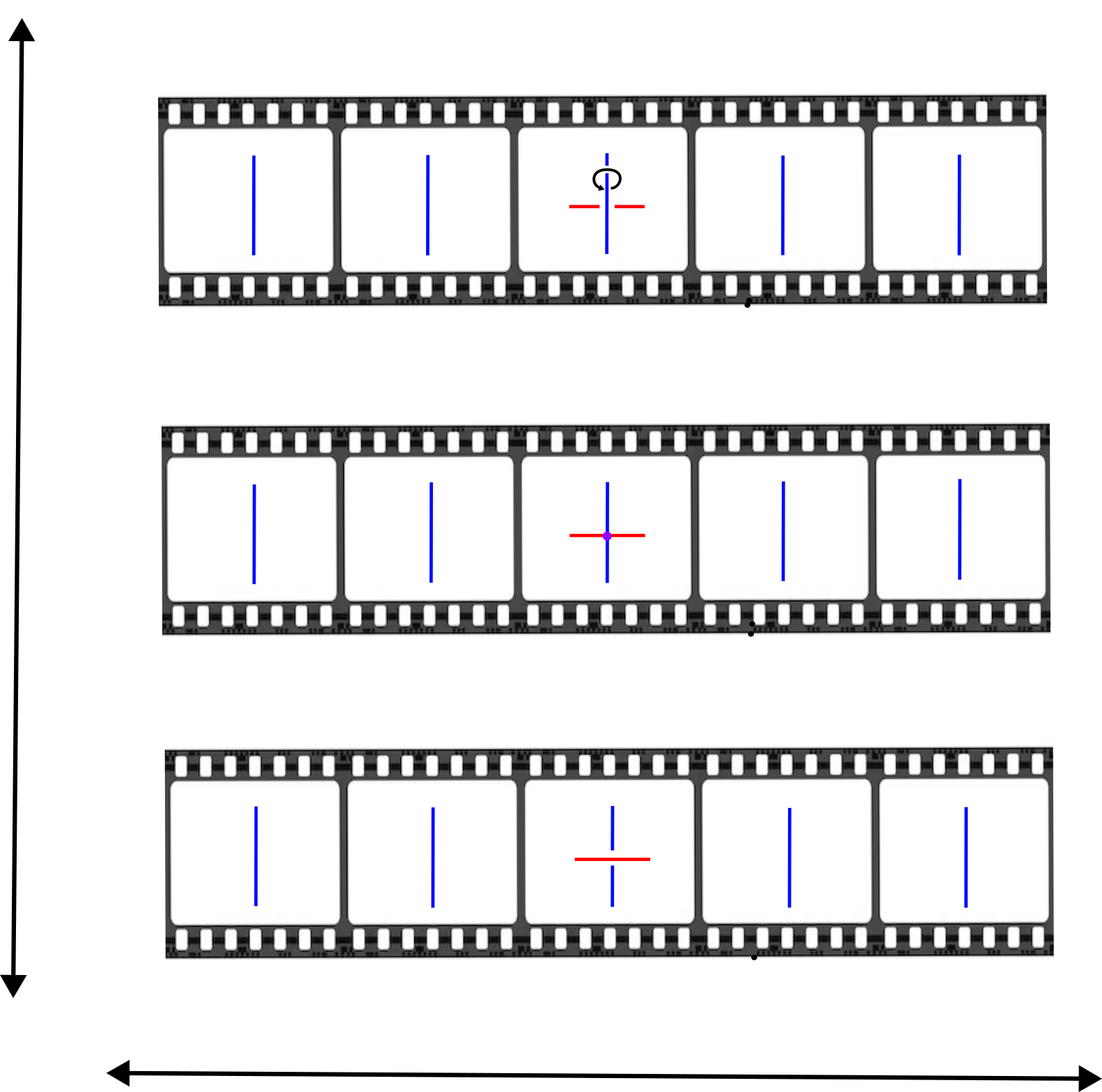}
\put(-285,140){$t$}
\put(-130,-10){$w$}
\put(-118,230){$m$}
    \caption{The local picture of a homotopy with one transverse intersection point between a $1$-cycle (shown in red) and a $2$-cycle (shown in blue). Here, $w$ denotes the ``extra" spatial dimension, while $t$ denotes time during the homotopy. Note that the $1$-cycles at the beginning and end of the homotopy differ by exactly one meridian $m$ of the $2$-cycle -- therefore the linking number between the $1$ and $2$-cycle after such a homotopy changes by $\pm 1$, depending on the orientation of the $1$-cycle and the meridian.}
\label{crossingchangehomotopy}
\end{figure}

However, each face of $\Sigma''_6$ is smoothly isotopic rel boundary to the corresponding face in $\Sigma_6$ bounded by the same cycle\footnote{Classically, Marumoto \cite{Maru} proved this fact in the topological category.}, by Theorem 10.9 of Gabai \cite{Gabai}. Performing these isotopies simultaneously gives a homotopy from $\Sigma''_6$ to $\Sigma_6$ fixing (but not supported away from) the $1$-skeleton. During the homotopy, each face $F \subset \mathcal{S}(K_6)$ may intersect another face or the interior of an edge $E$. After a perturbation of the homotopy, we may assume by general position that $F$ avoids $0$-cells completely during the homotopy. Moreover, we can arrange for the trace of the homotopy of $F$ in $\mathbb{R}^4$ to intersect the interior of each edge $E$ transversally in finitely many points during the homotopy. The local model of such a homotopy is shown in Figure \ref{crossingchangehomotopy}. 

Readers familiar with Conway and Gordon's argument in \cite{CandG} will begin to note the similarity of our proof with theirs. We must show that the value $\Lambda$ is unchanged throughout this homotopy. However, in contrast to their setting, our $2$-complex may be immersed at all times \emph{throughout} the homotopy, rather than just at finitely many times as in their case. Therefore, although $\Lambda$ has been defined only for embedded $2$-complexes in $\mathbb{R}^4$, for the purposes of our argument we extend its definition to $2$-complexes immersed in $\mathbb{R}^4$ whose only double points are between faces intersecting in their interiors. In this special case, the linking number between disjoint $1$ and $2$-cycles is still well-defined. 

This tally of linking numbers can only be changed at the finitely many times when a face $F$ passes through an edge $E$ completely disjoint from $F$ in $\mathcal{S}(K_6)$, i.e. an edge whose endpoints do not intersect the boundary of the face $F$. Consider such an edge $E$, illustrated in Figure \ref{labelledpieces}. The homotopy of $F$ can be modified near the intersection point with $E$ as shown in Figure \ref{deformation} by first pushing $F$ closer to one endpoint $p$ of $E$, and then replacing the intersection between $E$ and $F$ by four new intersection points, between $F$ and the four other edges of $\mathcal{S}(K_6)$ also incident with the vertex $p$. 

\begin{figure}[h]
    \centering
\includegraphics[width=0.4\linewidth]{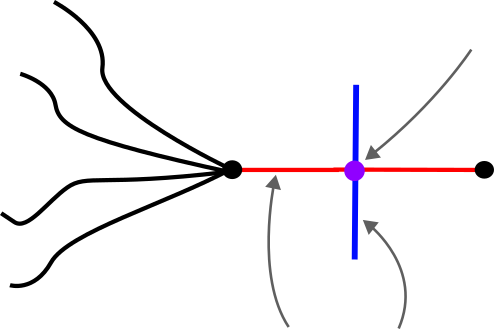}
\put(-140,91){$E_1$}
\put(-150,70){$E_2$}
\put(-60,-11){$E$}
\put(-77,34){$p$}
\put(-31,-11){$F$}
\put(-4,85){The intersection point $E \pitchfork F$}
\put(20,73){during the homotopy}
    \caption{Labeling components of the local diagram for the homotopies from the proof of Theorem \ref{mainproof} depicted in Figure \ref{deformation}.}
    \label{labelledpieces}
\end{figure}

\begin{figure}[h]
    \centering
\includegraphics[width=1\linewidth]{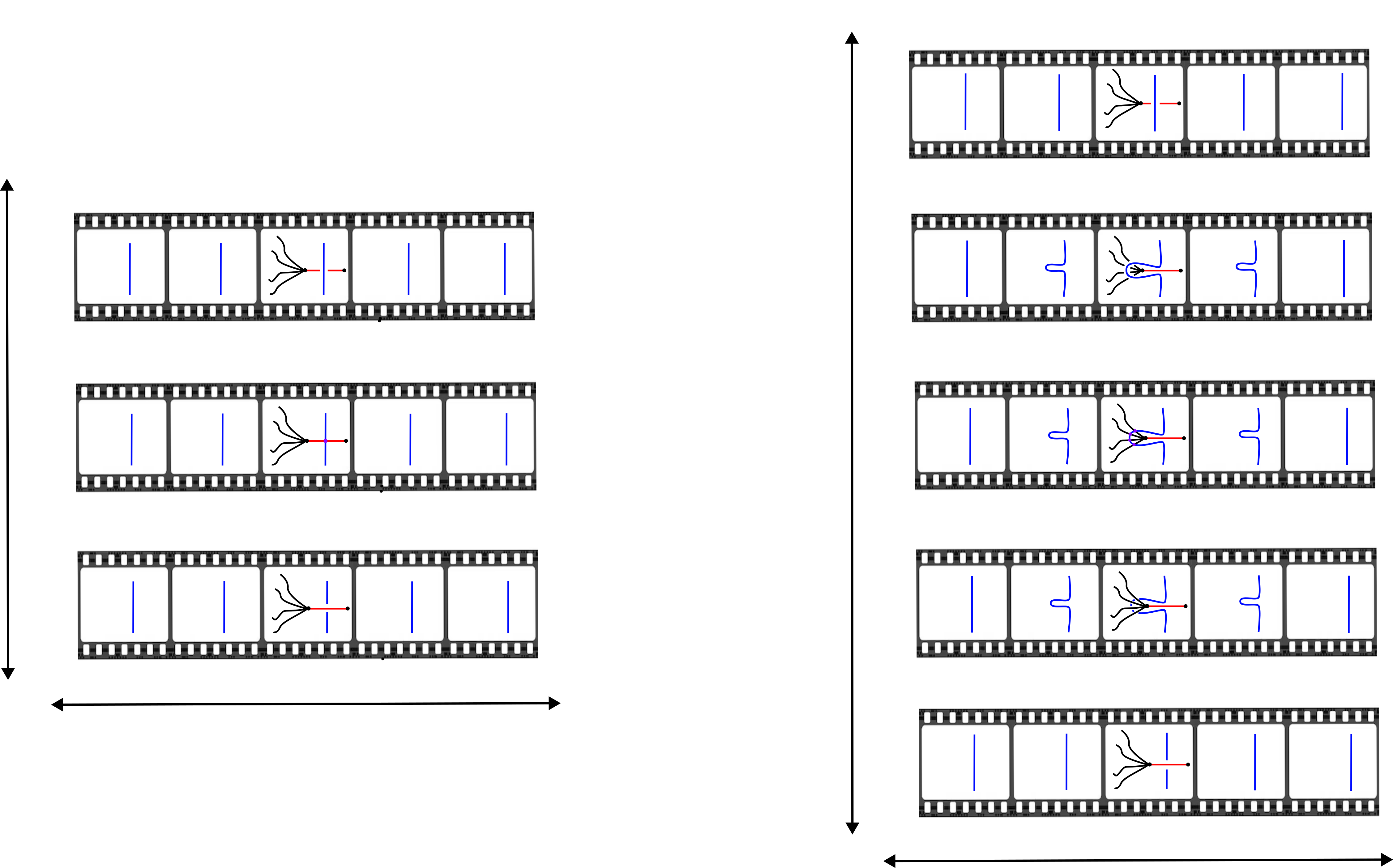}
\put(-273,28){$w$}
\put(-60,-10){$w$}
\put(-370,145){$t=1$}
\put(-380,103){$t=1/2$}
\put(-370,60){$t=0$}
\put(-165,185){$t=1$}
\put(-172,103){$t=1/2$}
\put(-165,21){$t=0$}
    \caption{The homotopies of the face $F$ both before (left) and after (right) the deformation from the proof of Theorem \ref{mainproof}. As in Figure \ref{crossingchangehomotopy}, the variable $w$ denotes the extra spatial dimension in $\mathbb{R}^4$, while $t$ gives the time during the homotopy. Note that this deformation is supported entirely for times $t \in [0,1]$, i.e. at times $t=0$ and $t=1$ of both homotopies the position of the $2$-complex is identical. Therefore, this local change does not affect the rest of the homotopy and can be made whenever an intersection point as in Figure \ref{labelledpieces} arises.}
    \label{deformation}
\end{figure}

Two of these four edges are not disjoint from the boundary of $F$, so their intersections with $F$ during the homotopy do not affect the linking numbers tallied in $\Lambda$. However, the remaining pair of edges, which we call $E_1$ and $E_2,$ are disjoint from $F$ (see Figure \ref{labelledpieces}). For both $i=1,2$, there are exactly two triangles $T$ and $T'$ in $K_6$ containing $E_i$ with dual triangles $\bar T$ and $\bar T'$ containing an edge of the face $F$, as shown in Figure \ref{disjointcycles}. 
By Lemma \ref{trianglelemma}, it follows that $T \cup \mathcal S(\bar T)$ and $T' \cup \mathcal S(\bar T')$ are the only two links containing both $E_i$ and $F$ in the suspension $\mathcal{S}(K_6)$. This pair of links are depicted in Figure \ref{disjointcycles}. 

The absolute values of the linking numbers $\omega(T, \mathcal S(\bar T))$ and $\omega(T', \mathcal S(\bar T'))$ will each change by $\pm 1$ after each intersection of $E_i$ with $F$ during the homotopy. Hence each intersection point of $F$ with $E_i$ during the homotopy contributes either $0$ or $1$ mod 2 to the sum $\Lambda$ from Definition \ref{4dinvt} (due to the factor of $\frac{1}{2}$ in each term $\Omega_i$). Therefore, the \emph{pair} of intersection points with $E_1$ and $E_2$ modifies $\Lambda$ by a total of $0$ mod $2$. 

\begin{figure}[h]
    \centering
\includegraphics[width=0.9\linewidth]{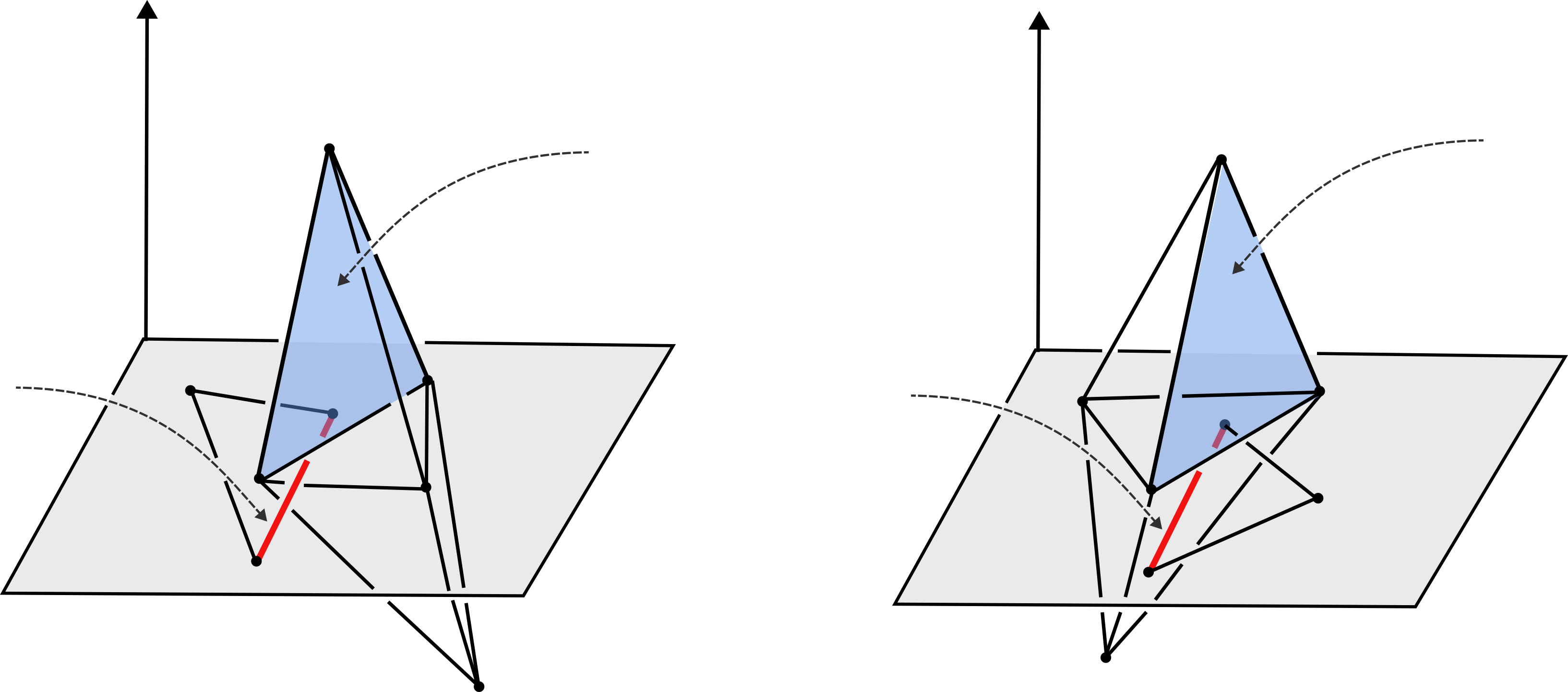}
\put(-294,132){$w$}
\put(-118,130){$w$}
\put(-340,57){$E_i \subset T$}
\put(-165,57){$E_i \subset T'$}
\put(-190,105){$F \subset \mathcal S(\bar T)$}
\put(-15,106){$F \subset \mathcal S(\bar T')$}
    \caption{An example embedding of the suspension $\mathcal S(K_6)$ with the pair of $1$-cycles $T$ and $T'$ containing $E_i$ that are disjoint from the $2$-cycles $\mathcal S(\bar T)$ and $\mathcal S(\bar T')$ containing the face $F$.}
    \label{disjointcycles}
\end{figure}

Since the tally of linking numbers between disjoint $1$ and $2$-cycles are unchanged mod $2$ throughout the homotopy, the value of $\Lambda$ for the embedding of the suspension $\mathcal S(K_6)$ after the homotopy remains non-trivial, as desired. \qed
\\

Our main result, Theorem \ref{mainthm} stated in the introduction, follows directly from Lemma \ref{lem1} and Theorem \ref{mainproof}. 

\subsection{Future directions}

We end by presenting some remaining challenges and questions. 

\begin{enumerate}
    \item Are there other operations (similar to suspension) that map  intrinsically linked graphs in 3 dimensions to intrinsically linked 2-complexes in 4 dimensions?
    \item The operation of exchanging a $3$-cycle in a graph with a ``Y" is known to preserve intrinsic linkedness \cite{ROBERTSON}. Are there analogous operations on a $2$-complex that preserve this property? Operations are known (see \cite{embed2cx} for example) that preserve the embeddability of a $2$-complex in $\mathbb{R}^4$; these moves along with other higher dimensional analogues featured in \cite{nikkuni} are perhaps good places to start.
    \item Conway and Gordon \cite{CandG} also show that any embedding of $K_7$ into $\mathbb{R}^3$ has a cycle with non-trivial Arf invariant -- this cycle is non-trivially knotted. Can we mimic this result for $2$-complexes in 4D using some invariant of knotted $2$-spheres or surfaces? 
\end{enumerate}

\newpage

\bibliographystyle{plain}
\bibliography{references.bib}

\end{document}